\newcommand{\al}{\alpha}
\newcommand{\gm}{\gamma}
\newcommand{\eps}{\varepsilon}
\newcommand{\ee}{\mathrm{e}}
\newcommand{\dd}{\mathrm{d}}
\newcommand{\cc}{\mathrm{c}}
\newcommand{\RR}{\mathbb{R}}
\newcommand{\TT}{\mathbb{T}}
\newcommand{\C}{\mathcal{C}}
\newcommand{\Prob}{\mathbf{P}}
\newcommand{\Exp}{\mathbf{E}}
\newcommand{\lng}{\langle}
\newcommand{\rng}{\rangle}
\definecolor{darkred}{rgb}{0.5,0.0,0.0}
\theoremstyle{plain}
\newtheorem{theorem}{Theorem}[section]
\newtheorem*{theorem*}{Theorem}
\newtheorem{proposition}[theorem]{Proposition}
\newtheorem{corollary}[theorem]{Corollary}
\newtheorem{lemma}[theorem]{Lemma}
\theoremstyle{definition}
\newtheorem{assumption}[theorem]{Assumption}
\theoremstyle{remark}
\newtheorem{remark}[theorem]{Remark}
\numberwithin{equation}{section}
\title{Synchronisation by noise for the stochastic quantisation equation in dimensions $2$ and $3$}
\author{Benjamin Gess, Pavlos Tsatsoulis}
\date{}
\renewenvironment{abstract}
{
\begin{minipage}{.9\textwidth}\small\textbf{Abstract}.\noindent
}
{
\end{minipage}
}
\newenvironment{keywords}
{
\begin{minipage}{.9\textwidth}\small\textbf{Keywords}:\noindent
}
{
\end{minipage}
}
\newenvironment{msc}
{
\begin{minipage}{.9\textwidth}\small\textbf{MSC 2010}:\noindent
}
{
\end{minipage}
}
\begin{document}

\maketitle

\begin{abstract} 
We prove uniform synchronisation by noise with rates for the stochastic quantisation equation in dimensions two and three. The proof relies on a combination of
coming down from infinity estimates and the framework of order-preserving Markov semigroups derived in [Butkovsky, Scheutzow; 2019]. In particular, it is
shown that this framework can be applied to the case of state spaces given in terms of Hölder spaces of negative exponent.
\end{abstract}

\bigskip

\begin{keywords} 
synchronisation by noise, stochastic Allen-Cahn equation, stochastic quantisation equation, coming down from infinity.
\end{keywords}

\smallskip

\begin{msc} 60H15, 37A25, 35K57 \end{msc}

\tableofcontents

\section{Introduction}

In this work we prove uniform synchronisation by noise for the stochastic quantisation equation in space dimension
$d=2$ and $3$ given by
\begin{equs}
 \begin{cases}
  & (\partial_t - \Delta) u = - \left(u^3 - 3\infty u\right) + u + \xi \quad \text{on } (0,\infty)\times\TT^d\\
  &  u|_{t=0} = f,
 \end{cases}
  \label{eq:sAC}
\end{equs}
where $\xi$ is space-time white noise, the initial condition $f$ lies in a space of distributions and $\TT^d$ denotes the $d$-dimensional torus
of fixed, but arbitrarily large, size. More precisely, we prove the following theorem. 

\begin{theorem} \label{thm:main_synchr} Let $\al_0 = \mathbf{1}_{\{d=3\}} \frac{1}{2} + \theta$, for $\theta>0$ sufficiently small, 
and let $u(t;f)$ be the solution to \eqref{eq:sAC} at time $t\geq 0$ with initial condition $f$. Then, there exists $\lambda_*>0$ 
such that for every $\al\in(\al_0-\delta,\al_0]$, $0<\delta <\theta$, and $p> \frac{d}{\al-\al_0+\delta}$ integer, we have that
\begin{equs}
 \left(\Exp\left(\sup_{f_1,f_2\in \C^{-\al_0}} \|u(t;f_2) - u(t;f_1)\|_{-\al}\right)^p\right)^{\frac{1}{p}} 
 & \lesssim_p \ee^{-\frac{\lambda_*}{p} t}. \label{eq:intro-synchr}
\end{equs}
\end{theorem}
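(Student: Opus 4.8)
The plan is to combine two main ingredients: (i) a coming-down-from-infinity estimate for the solution of \eqref{eq:sAC}, which shows that at any fixed positive time the solution lies in a fixed ball of $\C^{-\al_0}$ (or better) with quantitative bounds that are uniform over all initial data $f\in\C^{-\al_0}$; and (ii) the abstract criterion of Butkovsky--Scheutzow for order-preserving Markov semigroups, which upgrades a lower bound on a suitable "coupling at a point" to exponential synchronisation. The first step is to recall that \eqref{eq:sAC} generates an order-preserving Markov semigroup on $\C^{-\al_0}$: the comparison principle for the remainder equation (after subtracting the stationary stochastic convolution / Wick powers) gives monotonicity of $f\mapsto u(t;f)$ with respect to the natural order on distributions, so it suffices to control the gap between the maximal and minimal solutions started from $\pm\infty$ (interpreted via the coming-down estimates).

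Next I would make precise the coming-down-from-infinity bound. Decompose $u = X + v$ (or the Da Prato--Debussche / paracontrolled decomposition appropriate in $d=3$) where $X$ collects the explicit noise terms and $v$ solves a random PDE with a cubic damping $-v^3 + \dots$. The sign of the leading nonlinearity yields an a priori bound of the form $\|v(t)\|_{\C^{\beta}} \lesssim t^{-\kappa}(1 + \text{explicit noise norms})$ for $t\le 1$, with a rate $\kappa$ depending on dimension, \emph{independent of the initial condition}; this is where $\al_0 = \mathbf{1}_{\{d=3\}}\tfrac12 + \theta$ enters, since in $d=3$ one needs the extra $\tfrac12$ of regularity to make sense of the remainder. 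Consequently $u(1;f)$ lives in a (random) ball $B_R \subset \C^{-\al}$ whose radius has moments of all orders, uniformly in $f$. I would then show that, started from two points in such a ball, the semigroup admits a lower bound on the probability of ordering / a small coupling time — here one uses that the stationary solution has full support and a local contraction / irreducibility argument near a deterministic reference profile — which is exactly the hypothesis needed to invoke the Butkovsky--Scheutzow machinery.

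The abstract synchronisation result then gives, for the two-point motion started from \emph{ordered} data, exponential decay of $\Exp\|u(t;f_2)-u(t;f_1)\|_{-\al}^p$ at rate $\lambda_* $; combining with monotonicity, the supremum over $f_1,f_2\in\C^{-\al_0}$ is controlled by the gap between the extremal solutions, and the coming-down estimate at time $1$ (plus the Markov property, splitting $[0,t]$ into $[0,1]$ and $[1,t]$) converts this into the claimed bound with the $\ee^{-\lambda_* t/p}$ rate. The constraint $p > d/(\al-\al_0+\delta)$ is a Sobolev-type embedding threshold needed so that the $\C^{-\al}$-norm of the difference (a spatial supremum of a Littlewood--Paley series) can be dominated, after taking $p$-th moments, by a sum of $L^p$-type bounds on the building blocks that decay at the synchronisation rate; roughly, one trades the negative-Hölder norm for an $L^p$-based Besov norm and pays $d/p$ in the regularity index.

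The main obstacle I expect is verifying the Butkovsky--Scheutzow hypotheses in the genuinely infinite-dimensional, negative-regularity setting: specifically, establishing the required irreducibility / small-set estimate uniformly on balls of $\C^{-\al_0}$ rather than on compact sets, and controlling the two-point motion when the initial data are only distributions (so that pointwise comparison must be routed through the regularised remainder $v$). Handling the $d=3$ case — where the decomposition is paracontrolled and the comparison principle is less transparent — and ensuring all constants in the coming-down estimate are truly independent of $f$ (not merely of $\|f\|_{\C^{-\al_0}}$) is where the technical weight of the proof lies; the exponential rate $\lambda_*$ ultimately comes from the strictly dissipative structure of the cubic term combined with the spectral gap furnished by the abstract theorem.
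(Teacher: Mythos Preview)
Your architecture is broadly correct --- order preservation, coming down from infinity, and exponential ergodicity feeding into the Butkovsky--Scheutzow framework is exactly what the paper does --- but you have misidentified where the real work lies, and the piece you are missing is precisely the paper's main technical contribution.

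First, the ergodicity input: you propose to establish irreducibility / small-set estimates directly, and flag this as the main obstacle. In the paper this is simply \emph{cited} as an assumption (Assumption~\ref{ass:ergod_wass}), verified from existing work: total-variation mixing in $d=2$ from \cite{TW18i} and in $d=3$ from \cite{HS19}, both of which immediately imply the required Wasserstein contraction. No new irreducibility argument is needed, and ``coupling at a point'' is not the mechanism here.

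Second, and more importantly, the Butkovsky--Scheutzow criterion does not run on coupling / small-set bounds alone: its crucial hypothesis (their Theorem~2.3~(3)) is the existence of a \emph{monotone functional} $\varphi$ on the state space with $\|f-g\|^p \le \varphi(f)-\varphi(g)$ for $g\preceq f$ and with Lipschitz-type control of $\varphi(f)-\varphi(g)$ in terms of $\|f-g\|$. On $L^p$ this is $\varphi_p(f)=2^{p-1}\int \mathrm{sgn}(f)|f|^p$, but on a H\"older space of \emph{negative} exponent it is not at all obvious such a functional exists. The paper's point is that if one realises $\|\cdot\|_{-\al}$ through the heat-semigroup Besov norm $\|\cdot\|_{-\al+d/p;p}$ (this is where the embedding threshold $p>\tfrac{d}{\al-\al_0+\delta}$ enters, exactly as you guessed), then
\[
\varphi_{-\al+\frac{d}{p};p}(f)\;=\;\int_0^1 s^{\frac{(\al-d/p)p}{2}}\,\varphi_p(f_s)\,\frac{\dd s}{s}
\]
does the job, because heat smoothing preserves the order $\preceq$ (Lemmas~\ref{lem:norm_decoupl} and~\ref{lem:phi_bnd}). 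Without this functional you cannot pass from Wasserstein contraction of the one-point motion to control of $\sup_{f_1,f_2}\|u(t;f_2)-u(t;f_1)\|_{-\al}$: the monotone functional is what decouples the supremum into $\varphi(u_+)-\varphi(u_-)$, which can then be estimated in expectation via the coupling. Your proposal contains no candidate for this object, and that is the genuine gap.

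Finally, the comparison principle in $d=3$ is not obtained through the paracontrolled remainder as you suggest; the paper proves it at the level of the smooth approximations \eqref{eq:rsAC} (Proposition~\ref{prop:order_pres}) and passes to the limit in the distributional order, which sidesteps the issue entirely.
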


Here, for $\kappa>0$, $\C^{-\kappa}$ denotes a H\"older space of negative exponent with norm $\|\cdot\|_{-\kappa}$ (see Section \ref{s:notation} below for definitions).  

Theorem \ref{thm:main_synchr} will be obtained as a special case from a general framework implying quantified, \textit{uniform} synchronisation by noise, given in Theorem \ref{thm:synchr} below. 

We call \eqref{eq:intro-synchr} \textit{uniform} synchronisation by noise, since the speed at which two trajectories $u(t;f_2)$, $u(t;f_1)$ approach each other is uniform in the initial
conditions $f_1$, $f_2$. This is in contrast to the long-time behaviour of the deterministic analogon, given by
\begin{equs}
  \begin{cases}
   & (\partial_t - \Delta) u = - u^3 + u \quad \text{on } (0,\infty)\times\TT^d\\
   &  u|_{t=0} = f,
  \end{cases}
\end{equs}
which has finitely many unstable modes. In this sense, the inclusion of noise in \eqref{eq:sAC} produces global asymptotic stability.

As a corollary, we prove the existence of a weak attractor $\{\eta(0)\}$ consisting of a single random
point $\eta(0)$ with law given by the invariant measure of \eqref{eq:sAC}.

\begin{corollary}\label{cor:weak_attr} Under the assumptions of Theorem \ref{thm:main_synchr}, there exists a stationary family of random variables 
$\{\eta(t)\}_{t\geq 0}$ in $L^p(\Omega;\C^{-\al})$, with the law of $\eta(0)$ being the invariant measure for \eqref{eq:sAC}, 
such that 
\begin{equs}
 \left(\Exp \left(\sup_{f\in \C^{-\al_0}} \|u(t;f) - \eta(t)\|_{-\al}\right)^p\right)^{\frac{1}{p}} \lesssim_p \ee^{-\frac{\lambda_*}{p} t}.
\end{equs}
\end{corollary}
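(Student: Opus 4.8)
The plan is to construct the stationary family $\{\eta(t)\}_{t\geq 0}$ directly from the synchronisation estimate of Theorem \ref{thm:main_synchr} by a Cauchy-sequence argument run backwards in time, i.e. using the skew-product / two-sided noise representation of the solution. Concretely, I would fix a two-sided space-time white noise $\xi$ on $\RR\times\TT^d$ and write $u(t;s,f)$ for the solution of \eqref{eq:sAC} started at time $s$ from $f$ and evaluated at time $t\geq s$; the Markov solution from Theorem \ref{thm:main_synchr} corresponds to $s=0$. For fixed $t$, I would show that the family $\{u(t;-n,f_n)\}_{n\in\NN}$ is Cauchy in $L^p(\Omega;\C^{-\al})$ as $n\to\infty$, \emph{uniformly} over the choices of the "initial" conditions $f_n\in\C^{-\al_0}$. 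This is exactly where Theorem \ref{thm:main_synchr} enters: by the cocycle property $u(t;-n,f_n) = u(t;-m, u(-m;-n,f_n))$ for $n\geq m$, and since $u(-m;-n,f_n)\in\C^{-\al_0}$ by the coming-down-from-infinity estimate (which I would cite from the earlier sections), we get
\begin{equs}
 \left(\Exp\,\|u(t;-n,f_n) - u(t;-m,f_m)\|_{-\al}^p\right)^{\frac1p}
 \leq \left(\Exp\, \sup_{g_1,g_2\in\C^{-\al_0}} \|u(t;-m,g_2) - u(t;-m,g_1)\|_{-\al}^p\right)^{\frac1p}
 \lesssim_p \ee^{-\frac{\lambda_*}{p}(t+m)},
\end{equs}
using stationarity of the noise to shift $[-m,t]$ to $[0,t+m]$. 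The right-hand side is summable in $m$, so the limit $\eta(t) := \lim_{n\to\infty} u(t;-n,f_n)$ exists in $L^p(\Omega;\C^{-\al})$ and, by the uniformity, does not depend on the sequence $(f_n)$.

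Next I would verify the three asserted properties. \emph{Stationarity of $\{\eta(t)\}_{t\geq 0}$}: this follows from stationarity of the two-sided noise together with the cocycle property — shifting time by $h$ maps $u(t+h;-n+h,f)$ to something with the same law as $u(t;-n,f)$, and passing to the limit gives that $(\eta(t_1+h),\dots,\eta(t_k+h))$ has the same law as $(\eta(t_1),\dots,\eta(t_k))$. \emph{Invariance of the law of $\eta(0)$}: since $\eta(t) = u(t;0,\eta(0))$ by the cocycle property and continuity of the solution map in the initial datum (started now from the genuinely random, but $\mathcal{F}_{\leq 0}$-measurable and hence independent-of-future-noise, initial condition $\eta(0)\in\C^{-\al}\subset\C^{-\al_0}$, after checking the embedding $\al\le\al_0$ for the relevant parameter range), stationarity in the previous step forces $\mathrm{Law}(\eta(0)) = \mathrm{Law}(\eta(t)) = \big(\mathrm{Law}(\eta(0))\big)P_t$, i.e. it is invariant for the Markov semigroup of \eqref{eq:sAC}. (Uniqueness of the invariant measure, if needed for the phrasing "the invariant measure", also follows from Theorem \ref{thm:main_synchr}, since two invariant measures pushed forward along the synchronising flow must coincide.) \emph{The attractor estimate}: for any $f\in\C^{-\al_0}$,
\begin{equs}
 \left(\Exp\,\sup_{f\in\C^{-\al_0}} \|u(t;f) - \eta(t)\|_{-\al}^p\right)^{\frac1p}
 = \left(\Exp\,\sup_{f\in\C^{-\al_0}} \left\|u(t;0,f) - \lim_{n} u(t;-n,f_n)\right\|_{-\al}^p\right)^{\frac1p},
\end{equs}
and bounding $\|u(t;0,f)-u(t;-n,f_n)\|_{-\al}\le \sup_{g_1,g_2\in\C^{-\al_0}}\|u(t;0,g_2)-u(t;0,g_1)\|_{-\al}$ (again via coming down from infinity for $u(0;-n,f_n)$) and letting $n\to\infty$ gives the claimed $\ee^{-\frac{\lambda_*}{p}t}$ rate directly from \eqref{eq:intro-synchr}.

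The main obstacle I anticipate is \emph{measurability and the precise meaning of the random initial condition}: one must ensure that $\eta(0)$, defined as an $L^p$-limit of solutions driven by noise on $[-n,0]$, is measurable with respect to the "past" $\sigma$-algebra $\mathcal{F}_{\leq 0} = \sigma(\xi|_{(-\infty,0)})$, so that feeding it back as an initial condition at time $0$ is compatible with the Markov/flow structure and independent of the driving noise on $[0,\infty)$ — this is what legitimises the identity $\eta(t)=u(t;0,\eta(0))$ and hence the invariance argument. A secondary technical point is that $u(t;s,\cdot)$ must be shown to extend continuously (in $L^p$) to random initial data in $\C^{-\al_0}$, not just deterministic ones; this should follow from the well-posedness and stability estimates established earlier in the paper, possibly combined with the coming-down-from-infinity bound to control the $\C^{-\al_0}$-norm of the random starting point, but it needs to be stated carefully. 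Everything else is a fairly mechanical consequence of the cocycle property, stationarity of the noise, and the exponential synchronisation rate already in hand.
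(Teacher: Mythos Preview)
Your proposal is correct and follows essentially the same pull-back construction as the paper: extend to two-sided time, show that $\{u(t;-n,f_n)\}_n$ is Cauchy in $L^p(\Omega;\C^{-\al})$ via the flow property and Theorem~\ref{thm:main_synchr}, and take $\eta(t)$ to be the limit (the paper simply fixes $f_n\equiv 0$). The one place where the paper proceeds differently, and more simply, is the invariance of the law of $\eta(0)$: rather than establishing the pathwise identity $\eta(t)=u(t;0,\eta(0))$ --- which, as you correctly flag, would require $\eta(0)$ to be measurable with respect to the past and the solution map to extend continuously to random initial data --- the paper works entirely with the approximants $u(0;s;0)$, uses the Markov/independence structure for those (where the initial condition is $\mathcal{F}_{s,0}$-measurable by construction), and passes to the limit $s\to-\infty$ only at the level of $\Exp F(\cdot)$ for bounded continuous $F$. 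This sidesteps your main anticipated obstacle entirely.
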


The proof of the main result heavily relies on the techniques derived in \cite{BS19}. In this context, the main insight of the present work is the realisation that the abstract condition 
posed in \cite[Theorem 2.3 (3)]{BS19} can be verified in Hölder spaces of negative exponent, a fact which may prove fruitful also in further context. Another important ingredient are the so-called 
``coming down from infinity'' estimates. In the present work these are used with the slight twist of constructing uniform upper and lower solutions. This allows to avoid the main assumption of
\cite[(1.3)]{FGS15} which could not be verified for singular SPDEs such as the stochastic quantisation equation in more than one dimension.

%\textit{[Literature on synchonization by noise]:} 

We now briefly comment on the existing literature. Synchronisation by noise for SPDEs has been investigated, for example, in \cite{ACW83,CCLR07,CCK07,G13}. Recently, some of these works have 
been generalised to weak synchronisation for SPDEs with \textit{multiplicative} noise in \cite{DGT19}. For the related effect of synchronisation in master-slave systems we refer to 
\cite{CS10} and the references therein. Approaches to synchronisation by noise based on local stability have been introduced in \cite{B91}, with extensions in \cite{FGS14,CGS16}, and large deviation techniques have been employed in \cite{MS88,MSS94,T08}. For
synchronisation for discrete time random dynamical systems (RDS) see \cite{H13,JK16,KJR13,N14} and the references therein. 

%Applications of synchronization by noise are to be found, for example, in theoretical physics \cite{KJR13,[32,43,45,46], climate dynamics [15,22,27], neurophysiology [48] and numerics [35].

Synchronisation by noise for stochastic semi-flows, as in the present work, has been analysed, for example, in \cite{AC98,CCK07,C02,CS04,G13,G13-4}. A rather general framework of sufficient
conditions for synchronisation by noise for order-preserving systems has been given in \cite{FGS15}, which recently has been extended in \cite{BS19} to yield quantitative estimates. Synchronisation
for the KPZ equation has recently been shown in \cite{R19} also relying on order-preservation.

%Order-preserving SPDE: singular, degenerate, scheutzow-chue, gess-scheu
%scheutzow-but quantified version
%Not applicable here

The ``coming down from infinity'' property for the stochastic quantisation equation first was obtained in the two dimensional case in \cite{TW18i}, where it was proven that after positive, but arbitrarily small,
time solutions are bounded in a suitable Besov space of negative exponent uniformly in the initial condition. The same result was obtained in three dimensions in \cite{MW17}.
Recently, in \cite{MW18} a stronger version of ``coming down from infinity'' was proven, which provides bounds on compact space-time sets. The implicit constant depends only on the realisation
of the noise on an enlargement of the set but it is uniform in the choice of space-time boundary conditions.    

Long time asymptotic behaviour for the stochastic quantisation equation has been studied in two dimensions in \cite{TW19} in the small noise and torus regime. In that work,
the authors prove that, with overwhelming probability, solutions started with initial conditions in a small neighbourhood of $1$ and $-1$ contract exponentially fast to each other,
with explicit estimates on the rate of contraction. This property can also be seen as a synchronisation-type result, but the method developed in \cite{TW19} heavily uses large deviation 
theory, hence it is restricted to small noise.

In the present work we are able to treat both two and three dimensions, without any restriction on the size of the noise and the size of the torus, and prove synchronisation uniformly over 
the initial conditions. In fact, in two dimensions, our proof also applies to higher polynomial non-linearities of odd degree with negative leading coefficient (see Remark \ref{rem:other_pol} below).

Let us now briefly comment on the solution theory for \eqref{eq:sAC}. The term $-3\infty u$ on the right hand side of \eqref{eq:sAC} is reminiscent of renormalisation, 
since \eqref{eq:sAC} is not well-posed in dimensions two and three otherwise. This can be made rigorous if we let $\xi_\eps = \xi*\rho_\eps$, where $\rho_\eps$ is a 
smooth mollifier in space. Then there exist (a family of) renormalisation constants $C_\eps\nearrow \infty$, as $\eps \nearrow 0$, such that the solution $u_\eps$ to
\begin{equs}
 \begin{cases}
  & (\partial_t - \Delta) u_\eps = - (u^3_\eps - 3C_\eps u_\eps) + u_\eps+ \xi_\eps \\
  &  u_\eps|_{t=0} = f*\rho_\eps,
 \end{cases}
 \label{eq:rsAC}
\end{equs}
converges to a distribution $u$. Moreover, under the right choice of renormalisation constants $C_\eps$ the limit 
is independent of the choice of $\rho_\eps$. We refer the reader to \cite{DPD03} and \cite[Section 9.4]{Ha14} for 
more details on how one can choose $C_\eps$ to obtain a (non-trivial) limit of the solution $u_\eps$ to \eqref{eq:rsAC}.
Here we study the limit $u:=\lim_{\eps\searrow0}u_\eps$ as a stochastic semi-flow taking values in a H\"older space
of negative exponent.  

\subsection{Notation and basic definitions} \label{s:notation}

For $p\in[1,\infty]$, we denote by $\|\cdot\|_p$ the usual $L^p$-norm on the space of periodic functions $f:\TT^d\to \RR$. 
For $\al>0$ and $p\in[1,\infty)$, we define the Besov norm $\|\cdot\|_{-\al;p}$ on the space of periodic distributions $f$ by
\begin{equs}
 \|f\|_{-\al;p} & := \left(\int_0^1 s^{\frac{\al p}{2}} \|f_s\|_p^p \frac{\dd s}{s}\right)^{\frac{1}{p}}, \label{eq:besov_neg_p}
\end{equs}
where for $s\in(0,1]$ we denote by $f_s$ the convolution with the heat kernel, that is, $f_s := \ee^{s\Delta}f$. We also define
the Besov norm $\|\cdot\|_{-\al}$ by
\begin{equs}
 \|f\|_{-\al} & := \sup_{s\in(0,1]} s^{\frac{\al}{2}} \|f_s\|_\infty \label{eq:besov_neg_infty}
\end{equs}
and we denote by $\C^{-\al}$ the closure of $\C^\infty$ with respect to $\|\cdot\|_{-\al}$. Note that $\C^{-\al} \subset \C^{-\beta}$
if $\al\leq \beta$ and, in particular, $\|\cdot\|_{-\beta} \lesssim _{\al,\beta} \|\cdot\|_{-\al}$.

\begin{remark} Let us mention that definitions \eqref{eq:besov_neg_p} and \eqref{eq:besov_neg_infty} are equivalent to the more standard definitions of Besov norms through Littlewood--Paley blocks 
(see for example \cite[Theorem 2.34]{BCD11}). Using the same idea as in \cite[proof of Lemma A.3-i]{OW19}, one can 
replace the semigroup $\ee^{s\Delta}$ in \eqref{eq:besov_neg_p} and \eqref{eq:besov_neg_infty} by any rescaled smooth function 
$\psi_s(x) = s^{-\frac{d}{2}} \psi(s^{-\frac{1}{2}}x)$ such that $\int_{\TT^d} \psi \, \dd x =1$. 
\end{remark}

It is immediate from \eqref{eq:besov_neg_p} and \eqref{eq:besov_neg_infty} that for every $p\geq 1$ the following holds,
\begin{equs}
 \|f\|_{-\al;p} & \lesssim_{\al,p} \|f\|_{-\al}. \label{eq:besov_emb_infty_p} 
\end{equs}
For every $\al\in \RR$ and $p\geq1$, we also have that
\begin{equs}
 \|f\|_{-\al} & \lesssim_{\al,p} \|f\|_{-\al+\frac{d}{p};p}. \label{eq:besov_emb_p_infty}
\end{equs}
This inequality is essentially \cite[Proposition 2.20]{BCD11}. It will be useful in the sequel since it allows us to pass from \eqref{eq:besov_neg_infty}
to \eqref{eq:besov_neg_p}. 

For functions $f,g$ we write $f\preceq g$ whenever $f(x)\leq g(x)$ for almost every $x$. When $f,g$ are periodic distributions we
also write $f\preceq g$ whenever $\lng f,\varphi\rng \leq \lng g, \varphi\rng$ for every non-negative $\varphi\in \C^\infty$. 

Let $(\Omega, \mathcal{F}, \Prob)$ be a probability space, $(\mathcal{F}_{s,t})_{t\geq s\geq 0}$ be a family of sub-$\sigma$-algebras 
of $\mathcal{F}$ and $(\mathcal{M},\mathfrak{m})$ be a metric space endowed with the Borel $\sigma$-algebra. We call a collection 
$\{u(t;s;\cdot); t\geq s\geq 0\}$ of $\mathcal{F}_{s,t}$-measurable random maps
\begin{equs}
 u(t;s;\cdot)(\omega):  \C^{-\al_0} \ni f \mapsto u(t;s;f)(\omega) \in \C^{-\al_0}, \quad \omega\in \Omega,
\end{equs}
a white noise stochastic semi-flow on $\mathcal{M}$ if the following holds, 
\begin{enumerate}
 \item The $\sigma$-algebras $\mathcal{F}_{s,t}$ and $\mathcal{F}_{s',t'}$ are independent for $s\leq t\leq s'\leq t'$.
 \item $\{u(t;s;\cdot); t\geq s\geq 0\}$ satisfies the flow property, that is, 
 $u(t+s;0;f)(\omega) = u(t;s;u(s;0;f)(\omega))(\omega)$, for every $t\geq s\geq 0$, $f\in \mathcal{M}$ and $\omega\in \Omega$.
 \item For every $f\in \C^{-\al_0}$ and $t\geq s\geq 0$, $u(t;s;f)$ and $u(t-s;0;f)$ have the same law. 
\end{enumerate}

For probability measures $\mu,\nu$ on a metric space $(\mathcal{M},\mathfrak{m})$ and $p\geq 1$, we define the Wasserstein distance 
$\mathcal{W}_{\mathfrak{m};p}(\mu,\nu)$ by
\begin{equs}
 \mathcal{W}_{\mathfrak{m};p}(\mu,\nu) := \left(\inf_{\pi \in \mathscr{C}(\mu,\nu)} \int_{\mathcal{M}\times\mathcal{M}} \mathfrak{m}(k,l)^p \, \pi(\dd k, \dd l)\right)^{\frac{1}{p}},
\end{equs}
where $\mathscr{C}(\mu,\nu)$ is the space of all probability measures $\pi$ which form a coupling of $\mu$ and $\nu$.

\subsection*{Acknowledgements}

Financial support by the DFG through the CRC 1283 ``Taming uncertainty and profiting from randomness and low regularity in analysis, stochastics and their applications''
is acknowledged.

\section{General framework}

In this section we work on a probability space $(\Omega, \mathcal{F}, \Prob)$ with a family $(\mathcal{F}_{s,t})_{t\geq s\geq 0}$ of sub-$\sigma$-algebras 
of $\mathcal{F}$. For $\al_0>0$, we fix an $\mathcal{F}_{s,t}$-measurable white noise stochastic semi-flow $\{u(t;s;\cdot): t\geq s\geq0\}$ on $\C^{-\al_0}$. Furthermore, we
assume that there exists $\delta>0$ such that the following holds. 

\begin{assumption} \label{ass:cont} For every $t> s\geq 0$ and every $\al\in(\al_0-\delta, \al_0]$ we have that
\begin{equs}
 \C^{-\al_0} \ni f \mapsto u(t;s;f)(\omega) \in \C^{-\al}, \quad \omega\in \Omega,
\end{equs}
and this map is continuous in $f$.
\end{assumption}

\begin{assumption} \label{ass:order_pres} The stochastic semi-flow is order preserving with respect to $\preceq$, that is,
whenever $f\preceq g$ we have that $u(t;s;f)(\omega) \preceq u(t;s;g)(\omega)$ for every $t\geq s \geq 0$ and $\omega \in \Omega$. 
\end{assumption}

\begin{assumption} \label{ass:coming_down_infty} For every $s\geq 0$ there exists a non-negative random variable $K_s$ such that $\sup_{s\geq0}\Exp K_s^p<\infty$, for every $p\geq 1$,
and the following bound holds for some $\gm>0$,
\begin{equs}
 \sup_{f\in\C^{-\al_0}} \sup_{t\in[s,s+1]} (t-s)^\gm \|u(t;s;f)(\omega)\|_{-\al} \leq K_s(\omega), \quad \omega\in \Omega,
\end{equs}
for every $\al\in(\al_0-\delta, \al_0]$. 
\end{assumption}

\begin{assumption} \label{ass:ergod_wass} There exists $\lambda_*>0$ such that for every $\al\in(\al_0-\delta,\al_0]$ and every $p\geq 1$,
\begin{equs}
 \sup_{f_1,f_2\in \C^{-\al_0}} \mathcal{W}_{\|\cdot-\cdot\|_{-\al;p}\wedge 1;p}\big(u(t;0;f_1), u(t;0;f_2)\big) \lesssim_{\al,p} \ee^{-\lambda_* t}.
\end{equs}
\end{assumption}

With these assumptions at hand we have the following theorem.

\begin{theorem} \label{thm:synchr} Let $\{u(t;s;\cdot):t\geq s\geq0\}$ be a white noise stochastic semi-flow which satisfies Assumptions \ref{ass:cont}-\ref{ass:ergod_wass}. Then, 
for every $\al\in (\al_0-\delta,\al_0]$ and $p> \frac{d}{\al-\al_0+\delta}$ integer, we have that 
\begin{equs}
 \left(\Exp\left(\sup_{f_1,f_2\in \C^{-\al_0}} \|u(t;0;f_2) - u(t;0;f_1)\|_{-\al}\right)^p\right)^{\frac{1}{p}} 
 & \lesssim_p \ee^{-\frac{\lambda_*}{p} t}. \label{eq:synchr}
\end{equs}
\end{theorem}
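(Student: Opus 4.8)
The plan is to combine the coming-down-from-infinity bound of Assumption \ref{ass:coming_down_infty} with the Wasserstein contraction of Assumption \ref{ass:ergod_wass}, using order-preservation to reduce the supremum over all pairs $(f_1,f_2)$ to a single quantity, following the strategy of \cite{BS19}. First I would observe that, since $u(t;s;\cdot)(\omega)$ is order-preserving and $\C^{-\al_0}$ is sandwiched between constant functions in the sense that for any $f\in\C^{-\al_0}$ there is no global bound — so instead one runs the flow for one unit of time and uses Assumption \ref{ass:coming_down_infty}: the values $\{u(1;0;f)(\omega):f\in\C^{-\al_0}\}$ are all bounded in $\C^{-\al}$ by $K_0(\omega)$. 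Hence after time $1$ every trajectory lies in the (random) order interval $[-K_0\mathbf 1, K_0\mathbf 1]$ (interpreting the bound in $L^\infty$, which controls the relevant negative-Hölder ball), and by order-preservation, for $t\geq 1$,
\begin{equs}
 \sup_{f_1,f_2\in\C^{-\al_0}} \|u(t;0;f_2)-u(t;0;f_1)\|_{-\al}
 \leq \| u(t;1;K_0\mathbf 1) - u(t;1;-K_0\mathbf 1)\|_{-\al}.
\end{equs}
This collapses the double supremum to a difference of two explicit trajectories started from the random upper and lower solutions $\pm K_0\mathbf 1$; call this random quantity $D_t(\omega)$.

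Next I would estimate $\Exp D_t^p$ by splitting the time interval $[1,t]$ and exploiting the flow property together with the Wasserstein bound. Writing $t = 1 + n + r$ for an integer $n$ and the semi-flow property, one has $D_t \leq D'$ where $D'$ is obtained by running from time $1+n$; the contraction of Assumption \ref{ass:ergod_wass} controls, in expectation, $\|u(s;0;g_1)-u(s;0;g_2)\|_{-\al;p}\wedge 1$ for fixed deterministic $g_1,g_2$, giving a factor $\ee^{-\lambda_* s}$. The point is to interpolate: use the coming-down estimate on the last unit of time to upgrade the truncated $\|\cdot\|_{-\al;p}\wedge 1$ control back to a genuine $\|\cdot\|_{-\al}$ bound (via \eqref{eq:besov_emb_p_infty}, which requires $p>d/(\al-\al_0+\delta)$ — this is exactly where the integrability hypothesis on $p$ enters, to afford the loss of $d/p$ derivatives in the Besov embedding from $L^p$ to $L^\infty$), and use the moment bounds $\sup_s\Exp K_s^p<\infty$ to absorb the random endpoints. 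Conditioning on $\mathcal F_{0,1+n}$ (independent of the later increments by property (1) of the semi-flow) lets one apply the deterministic-initial-data Wasserstein bound with $g_i = u(1+n;0;\pm K_0\mathbf 1)$ frozen, and then take expectations.

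Concretely, the chain is: for $t$ large, choose $n\sim t - 2$; by order-preservation and Assumption \ref{ass:coming_down_infty} applied on $[t-1,t]$,
\begin{equs}
 D_t \lesssim \big\| u(t;t-1;a_2) - u(t;t-1;a_1)\big\|_{-\al}
 \lesssim_{\al,p} \big\| u(t;t-1;a_2) - u(t;t-1;a_1)\big\|_{-\al+\frac dp;p},
\end{equs}
where $a_1\preceq a_2$ are the (random, $\mathcal F_{0,t-1}$-measurable) endpoints at time $t-1$, themselves bounded by $K$'s; then the coming-down bound on the semi-flow segment and Minkowski give control by $\|a_2-a_1\|\wedge(\text{const})$ composed with one more unit of contraction, and iterating/telescoping produces the geometric factor $\ee^{-\lambda_* (t-2)}$. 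Taking $p$-th moments, using $\sup_s\Exp K_s^p<\infty$ and the conditional independence to insert Assumption \ref{ass:ergod_wass} at each of the $\sim t$ unit steps, yields $\Exp D_t^p \lesssim_p \ee^{-\lambda_* t}$, i.e. the claimed $(\Exp D_t^p)^{1/p}\lesssim_p \ee^{-\lambda_* t/p}$; the regime $t\in[0,1]$ is handled trivially by the uniform moment bound on $K_0$.

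The main obstacle I anticipate is the bookkeeping in the interpolation step: Assumption \ref{ass:ergod_wass} only gives a \emph{Wasserstein} bound for the \emph{truncated} metric $\|\cdot-\cdot\|_{-\al;p}\wedge 1$ between the \emph{laws} of two trajectories, whereas the conclusion needs an \emph{almost-sure} sup over initial data inside the expectation of an \emph{untruncated} $\|\cdot\|_{-\al}$-norm. Bridging this requires (i) using order-preservation to turn the pathwise sup into a single pathwise difference $D_t$, (ii) using a coupling realising the Wasserstein distance on the last block together with the coming-down estimate to control the untruncated norm by the truncated one up to a factor that is finite in every $L^p$, and (iii) choosing $p$ large enough that the Besov embedding loss $d/p$ fits strictly inside the admissible Hölder window $(\al_0-\delta,\al_0]$. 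Getting the powers of $\ee^{-\lambda_*}$ to match across the change from the $L^p$-Wasserstein statement (which contributes $\ee^{-\lambda_* t}$ to a $p$-th moment) to the final $(\cdot)^{1/p}$ is the delicate point, and is presumably where the factor $\lambda_*/p$ in the exponent originates.
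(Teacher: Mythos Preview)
Your proposal has the right high-level architecture (sandwich via order-preservation, then contract), but there are two genuine gaps.

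First, the reduction step rests on a false implication. You write that the bound $\|u(1;0;f)\|_{-\al}\le K_0$ from Assumption~\ref{ass:coming_down_infty} places every trajectory in the order interval $[-K_0\mathbf 1,K_0\mathbf 1]$. It does not: a ball in a \emph{negative} H\"older space is not contained in any order interval, and indeed $u(1;0;f)$ is a genuine distribution with no pointwise or $L^\infty$ bound available. The paper constructs the barriers differently: it defines $u_\pm(1)$ as distributional limits of $u(1;\pm R)$ as $R\to\infty$ (existence via the coming-down bound plus the compact embedding $\C^{-\al+\eps}\hookrightarrow\C^{-\al}$, uniqueness via monotonicity), and then uses order-preservation on mollified initial data $f^{(\eps)}$---which \emph{are} bounded functions sandwiched by some $\pm R_\eps$---together with Assumption~\ref{ass:cont} to obtain $u_-(1)\preceq u(1;f)\preceq u_+(1)$ for every $f\in\C^{-\al_0}$.

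Second, and more seriously, you have not bridged the gap between the Wasserstein bound and the pathwise quantity $D_t$. Assumption~\ref{ass:ergod_wass} controls the infimum of $\Exp_\pi(\|X-Y\|_{-\al;p}\wedge 1)^p$ over \emph{all} couplings $\pi$, whereas $D_t=\|u_+(t)-u_-(t)\|_{-\al}$ is computed under the specific \emph{same-noise} coupling. Conditioning on $\mathcal F_{0,1+n}$ and freezing $g_i$ lets you invoke the Wasserstein bound, but that only produces \emph{some} good coupling, generally different from the one defining $D_t$; no telescoping repairs this. The paper's device is the monotone functional $\varphi_{-\al+d/p;p}(f)=\int_0^1 s^{(\al-d/p)p/2}\,2^{p-1}\!\int\mathrm{sgn}(f_s)|f_s|^p\,\dd x\,\tfrac{\dd s}{s}$, which satisfies $\|f-g\|_{-\al+d/p;p}^p\le\varphi(f)-\varphi(g)$ whenever $g\preceq f$ (Lemma~\ref{lem:norm_decoupl}). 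Combined with \eqref{eq:besov_emb_p_infty} this gives the pathwise bound $\sup_{f_1,f_2}\|u(t;f_2)-u(t;f_1)\|_{-\al}^p\lesssim\varphi(u_+(t))-\varphi(u_-(t))$; the crucial point is that $\Exp\varphi(u_\pm(t))$ depends only on the \emph{marginal} law, so after taking expectations one may replace $(u_+(t),u_-(t))$ by any coupling $(\tilde u_+,\tilde u_-)$ realising the Wasserstein infimum, and then bound $|\varphi(\tilde u_+)-\varphi(\tilde u_-)|$ by Lemma~\ref{lem:phi_bnd} and H\"older. You correctly flagged this passage as the main obstacle, but the $\varphi$-functional is precisely the missing idea that resolves it.
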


\begin{proof} To ease the notation we will drop the dependence on $\omega$. We will also simply write $u(t;f)$ instead of $u(t;0;f)$. 
The proof of the theorem consists of two steps. 

\textit{Step 1}: We fix $\al\in(\al_0-\delta,\al_0]$. We first construct profiles $u_+, u_-\in \C([1,\infty];\C^{-\al})$
with the followings properties, 
\begin{enumerate}[i.]
 \item \label{it:extr_order} For every $u_0\in \C^{-\al_0}$ and every $t\geq 1$, $u_-(t) \preceq u(t;u_0) \preceq u_+(t)$. 
 \item \label{it:extr_bnd} For every $p> \frac{d}{\al-\al_0+\delta}$, $\sup_{t\geq 2}\left(\Exp\|u_{\pm}(t)\|_{-\al+\frac{d}{p};p}^{\frac{p^2}{p-1}}\right)^{\frac{p-1}{p}} <\infty$. 
 \item \label{it:extr_ergod} For every $p> \frac{d}{\al-\al_0+\delta}$, 
  \begin{equs}
    \mathcal{W}_{\|\cdot-\cdot\|_{-\al+\frac{d}{p};p}\wedge 1;p}\big(u_+(t), u_-(t)\big) \lesssim_{\al,p} \ee^{-\lambda_* t},
    \label{eq:ergod_extr}
  \end{equs}
 for every $t\geq 1$. 
\end{enumerate}

For $R>0$ let $u_{\pm R}(1)$ be given by $u(1;\pm R)$. By Assumption \ref{ass:coming_down_infty} we know that for every $\eps>0$ sufficiently small, 
\begin{equs}
 \sup_{R>0} \|u_{\pm R}(1)\|_{-\al+\eps} \leq K.
\end{equs}
Using the fact that $\C^{-\al+\eps}$ embeds compactly into $\C^{-\al}$, there exists a sequence $R_n\nearrow \infty$ and $u_{\pm}(1)\in\C^{-\al}$
such that $u_{R_n}(1) \to u_{\pm}(1)$ in the sense of distributions. 

We now prove that $u_{\pm R} (1)\to u_{\pm}(1)$ in the sense of distributions. Let $\tilde R_n\nearrow \infty$ be another sequence, fix $\varphi\in \C^\infty$ non-negative and set $a_n = \lng u_{R_n}, \varphi \rng$, $b_n = \lng u_{\tilde R_n}, \varphi \rng$.
By Assumptions \ref{ass:order_pres} and \ref{ass:coming_down_infty} we know that $a_n$ and $b_n$ are non-decreasing and bounded, hence there exist $a$ and $b$ such that $a_n \nearrow a$ and $b_n \nearrow b$. 
Using again Assumption \ref{ass:order_pres} we see that for every $n\geq 1$ there exists $n_0,n_0'\geq n$ such that $a_{n_0}\geq b_n$ and $b_{n_0'} \geq a_n$ which implies that $a=b$. 
The same holds if $\varphi\in\C^\infty$ is arbitrary since we can find $K_\varphi>0$ such that $\varphi \geq -K_\varphi$ and use that
\begin{equs}
 \lim_{n\to\infty} \lng u_{R_n}, \varphi \rng & = \lim_{n\to\infty} \lng u_{R_n}, \varphi + K_\varphi \rng - \lim_{n\to\infty} \lng u_{R_n}, K_\varphi \rng 
 \\
 & = \lim_{n\to\infty} \lng u_{\tilde R_n}, \varphi + K_\varphi \rng - \lim_{n\to\infty} \lng u_{\tilde R_n}, K_\varphi \rng 
 \\
 & = \lim_{n\to\infty} \lng u_{\tilde R_n}, \varphi \rng.
\end{equs}
Therefore, the limit $u_+(1)$ is independent of the choice of the sequence $R_n$ and we have that $u_R(1) \to u_+(1)$ in the sense of distributions. The same argument implies
that $u_{-R}(1) \to u_-(1)$ in the sense of distributions. 

To construct $u_{\pm}(t)$ we first notice that for $f\in \C^{-\al_0}$, if we let $f^{(\eps)} := f*\rho_\eps$, where $\rho_\eps$, $\eps\in(0,1]$, is a
smooth mollifier, then $f^{(\eps)}\to f$ in $\C^{-\al_0}$, as $\eps \searrow 0$, and $\|f^{(\eps)}\|_\infty \leq R_\eps$ for some 
$R_\eps\nearrow \infty$. By Assumption \ref{ass:order_pres} we know that
\begin{equs}
 u_{-R_\eps}(1) \preceq u(1;f^{(\eps)}) \preceq u_{R_\eps}(1),
\end{equs}
and letting $\eps\searrow 0$, by Assumption \ref{ass:cont} we get that 
\begin{equs}
 u_-(1) \preceq u(1;f) \preceq u_+(1).
\end{equs}
Using once more Assumption \ref{ass:order_pres} and the flow property we obtain that
\begin{equs}
 u(t;1;u_-(1)) \preceq u(t;f) \preceq u(t;1;u_+(1)) \label{eq:compar}
\end{equs}
for every $t\geq 1$. We now set $u_\pm(t) := u(t;1;u_\pm(1))$. Hence property \ref{it:extr_order} is satisfied due to \eqref{eq:compar}, 
while \ref{it:extr_bnd} and \ref{it:extr_ergod} are essentially Assumptions \ref{ass:coming_down_infty} and \ref{ass:ergod_wass}
combined with \eqref{eq:besov_emb_infty_p} and the fact that $\mathcal{F}_{0,1}$ and $\mathcal{F}_{1,t}$ are independent for $t\geq 1$.

Before we proceed to the second step let us introduce the following notation. For $\al>0$, $p>\frac{d}{\al}$ and periodic distribution $f$, 
we let  
\begin{equs}
 \varphi_{-\al+\frac{d}{p};p}(f) = \int_0^1 s^{\frac{(\al-\frac{d}{p})p}{2}} \varphi_p(f_s) \frac{\dd s}{s},
\end{equs}
where 
\begin{equs}
 \varphi_p(f) = 2^{p-1} \int \left(f^p \mathbf{1}_{\{f\geq 0\}} - |f|^p \mathbf{1}_{\{f<0\}}\right) \, \dd x 
 = 2^{p-1} \int \mathrm{sgn}(f) |f|^p \, \dd x. 
\end{equs}
It is easy to see that $\varphi_{-\al+\frac{d}{p};p}(f)$ is finite whenever $\|f\|_{-\al+\frac{d}{p};p}$ is finite and, in particular, we have that
\begin{equs}
 \varphi_{-\al;p}(f) & \leq 2^{p-1} \|f\|_{-\al;p}^p. 
\end{equs}

\textit{Step 2}: Let $f_1,f_2\in \C^{-\al_0}$. By Step 1 we know that
\begin{equs}
 & u_+(t) \preceq u(t;f_i) \preceq u_-(t)
\end{equs}
for every $t\geq 1$ and $i=1,2$. We then notice that 
\begin{equs}
 \|u(t;f_2) - u(t;f_1)\|_{-\al}^p & \stackrel{\eqref{eq:besov_emb_p_infty}}{\lesssim} \|u(t;f_2) - u(t;f_1)\|_{-\al+\frac{d}{p};p}^p \\
 & \lesssim \|u(t;f_2) - u_-(t)\|_{-\al+\frac{d}{p};p}^p + \|u(t;f_1) - u_-(t)\|_{-\al+\frac{d}{p};p}^p,
\end{equs}
while, by Lemma \ref{lem:norm_decoupl}, we know that
\begin{equs}
 \|u(t;f_i) - u_-(t)\|_{-\al+\frac{d}{p};p}^p & \lesssim \varphi_{-\al+\frac{d}{p};p}(u(t;f_i)) - \varphi_{-\al+\frac{d}{p};p}(u_-(t)) \\
 & \lesssim \varphi_{-\al+\frac{d}{p};p}(u_+(t)) - \varphi_{-\al+\frac{d}{p};p}(u_-(t)),
\end{equs}
where we also use that $\varphi_{-\al+\frac{d}{p};p}$ is non-decreasing with respect to $\preceq$. Hence, we have proved that
\begin{equs}
 \left(\sup_{f_1,f_2\in \C^{-\al_0}} \|u(t;f_2) - u(t;f_1)\|_{-\al}\right)^p 
 \lesssim \varphi_{-\al+\frac{d}{p};p}(u_+(t)) - \varphi_{-\al+\frac{d}{p};p}(u_-(t)) 
 \\
 \label{eq:varphi_contr}
\end{equs}
for every $t\geq 1$. We now choose a coupling $(\tilde u_+(t),\tilde u_-(t))$ of $u_+(t)$ and $u_-(t)$ such that
\begin{equs}
 \left(\Exp_{\mathrm{c}} \left(\|\tilde u_+(t)-\tilde u_-(t)\|_{-\al+\frac{d}{p};p}\wedge1\right)^p\right)^{\frac{1}{p}} 
 & \lesssim \ee^{-\lambda_* t}.
\end{equs}
This is possible due to \eqref{eq:ergod_extr}. By Lemma \ref{lem:phi_bnd}, we know that
\begin{equs}
 & \Exp_{\mathrm{c}}\left(\varphi_{-\al+\frac{d}{p};p}(\tilde u_+(t)) - \varphi_{-\al+\frac{d}{p};p}(\tilde u_-(t))\right) 
 \\
 & \quad \lesssim \left(\Exp_{\mathrm{c}}(\|\tilde u_+(t) - \tilde u_-(t)\|_{-\al+\frac{d}{p};p}\wedge 1)^p\right)^{\frac{1}{p}} 
 \max\left\{1,\left(\Exp_{\mathrm{c}} \|\tilde u_\pm(t)\|_{-\al+\frac{d}{p};p}^{\frac{p^2}{p-1}}\right)^{\frac{p-1}{p}}\right\}. 
\end{equs}
Combining this estimate with property \ref{it:extr_bnd} we get that 
\begin{equs}
 \Exp_{\mathrm{c}}\left(\varphi_{-\al+\frac{d}{p};p}(\tilde u_+(t)) - \varphi_{-\al+\frac{d}{p};p}(\tilde u_-(t))\right)
 & \lesssim \ee^{-\lambda_* t},
\end{equs}
for every $t\geq 2$. To conclude, we notice that
\begin{equs}
 \Exp \left(\sup_{f_1,f_2\in \C^{-\al_0}} \|u(t;f_2) - u(t;f_1)\|_{-\al}\right)^p & \lesssim 
 \Exp \varphi_{-\al+\frac{d}{p};p}(u_+(t)) - \Exp \varphi_{-\al+\frac{d}{p};p}(u_-(t)) 
 \\
 & = \Exp_\cc \varphi_{-\al+\frac{d}{p};p}(\tilde u_+(t)) - \Exp_\cc \varphi_{-\al+\frac{d}{p};p}(\tilde u_-(t))
 \\ 
 & \lesssim \ee^{-\lambda_* t}.
\end{equs}
\end{proof}

\section{Applications to the stochastic quantisation equation}

As we already discussed in the introduction, the solutions to the stochastic quantisation equation \eqref{eq:sAC} can be interpreted in a renormalised
sense as limits of \eqref{eq:rsAC} where, in particular, one has to choose $C_\eps = \mathbf{1}_{\{d=3\}} C_3 \eps^{-1} + C_2 \log\eps^{-1} + C_1$ for 
suitable $C_i>0$, $i=1,2,3$. Then, for $\theta > 0$ sufficiently small and $\al_0 = \mathbf{1}_{\{d=3\}} \frac{1}{2} + \theta$, for every $f\in \C^{-\al_0}$
the solution $u_\eps(\cdot;f*\rho_\eps)$ to \eqref{eq:rsAC} converges to a limit $u(\cdot;f)$ in $\C^{-\al}$, for every $\al\in(\al_0-\delta,\al_0]$ and 
$0<\delta<\theta$, uniformly on compact subsets of $(0,\infty)$, $\Prob$-almost surely (see \cite[Proposition 2.3 and Proposition 3.4]{TW18i} 
for $d=2$ and \cite[Theorem 1.15]{Ha14} for $d=3$). The same statement holds if we consider $u(\cdot;s;f)$ for $s>0$,
that is, the limit, as $\eps \searrow 0$, of the solution $u_\eps(\cdot;s;f*\rho_\eps)$ to 
\begin{equs}
 \begin{cases}
  & (\partial_t - \Delta) u_\eps(\cdot;s) = - \left(u_\eps(\cdot;s)^3 - 3C_\eps  u_\eps(\cdot;s)\right) +  u_\eps(\cdot;s)+ \xi_\eps \\
  & u_\eps(\cdot;s)|_{t=s} = f*\rho_\eps.
 \end{cases}
 \\
 \label{eq:rsAC_restart}
\end{equs}
Furthermore, one has that $\Prob$-almost surely, for every $f\in \C^{-\al_0}$,  
\begin{equs}
 u_\eps(t;0;f*\rho_\eps) = u_\eps(t;s;u_\eps(s;0;f*\rho_\eps))
\end{equs}
for every $t\geq s\geq0$ and $\eps\in(0,1]$. 

\begin{proof}[Proof of Theorem \ref{thm:main_synchr}] The fact that the family $\{u(t;s;\cdot);t\geq s\geq 0\}$ gives rise to a white noise stochastic semi-flow follows 
from the fact that it arises as the $\Prob$-almost sure limit of the solution to \eqref{eq:rsAC_restart} and the corresponding family of 
sub-$\sigma$-algebras is given by the usual augmentation $(\mathcal{F}_{s,t})_{t\geq s\geq 0}$ of
\begin{equs}
 \tilde{\mathcal{F}}_{s,t} = \sigma\left(\{\xi(\phi):\phi\in L^2(\RR\times\TT^2) \text{ such that } \phi|_{[0,s)\cup(t,\infty)\times \TT^d} \equiv 0\}\right), \quad t\geq s\geq 0.
\end{equs}
To prove Theorem \ref{thm:main_synchr}, we need to verify Assumptions \ref{ass:cont}-\ref{ass:ergod_wass}.

\smallskip

Assumption \ref{ass:cont}: This is immediate from \cite[Proposition 4.3]{TW18i} for $d=2$ and \cite[Proposition 6.9 and Theorem 7.8]{Ha14} for $d=3$.

\smallskip

Assumption \ref{ass:order_pres}: We notice that by Proposition \ref{prop:order_pres} $\{u_\eps(t;s;\cdot):t\geq s\geq 0\}$ is order preserving for every $\eps>0$. Without loss of generality, 
we can assume that $\rho_\eps$ is a non-negative smooth mollifier, so that if $f_1\preceq f_2$ then $f_1*\rho_\eps \preceq f_2*\rho_\eps$. Since $u_\eps(t;s;f_i*\rho_\eps) \to u(t;s;f_i)$ in $\C^{-\al}$
for $i=1,2$, as $\eps \searrow 0$, we also have that $\lng u_\eps(t;s;f_i*\rho_\eps), \varphi \rng \to \lng u(t;s;f_i), \varphi \rng$, for every non-negative $\varphi\in\C^\infty$. This implies order
preservation for $\{u(t;s;\cdot):t\geq s\geq 0\}$. 

\smallskip

Assumption \ref{ass:coming_down_infty}: This was verified in \cite[Corollary 3.10]{TW18i} for $d=2$ and \cite[Eq. 1.27]{MW17} for $d=3$. 

\smallskip

Assumption \ref{ass:ergod_wass}: This was verified in \cite[Theorem 6.5]{TW18i} for $d=2$. Actually, in that work exponential mixing was obtained in the total variation distance which implies exponential mixing
in the Waserstein distance. The same result also holds for $d=3$ and it was obtained recently in \cite[Corollary 1.9]{HS19}.  
\end{proof}

\begin{proof}[Proof of Corollary \ref{cor:weak_attr}] We can extend our time interval to $-\infty$ (by adapting the definition of $\mathcal{F}_{s,t}$ for $t\geq s >-\infty$) 
and let $\eta(t;s):= u(t;s;0)$, for $t\geq s\geq -\infty$. Then, for $0\geq s_1\geq s_2$, by the flow property we have that 
\begin{equs}
 \eta(t;s_2) - \eta(t;s_1) = u(t;s_1;u(s_1;s_2;0)) - u(t;s_1;0).
\end{equs}
By Theorem \ref{thm:main_synchr} (extended for negative initial times) and the fact that $\mathcal{F}_{s_2,s_1}$ and 
$\mathcal{F}_{s_1,t}$ are independent we know that
\begin{equs}
 & \Exp\|u(t;s_1;u(s_1;s_2;0)) - u(t;s_1;0)\|_{-\al}^p 
 \\
 & \quad = \Exp \left(\Exp\|u(t;s_1;u(s_1;s_2;0)) - u(t;s_1;0)\|_{-\al}^p\big|\mathcal{F}_{s_1,s_2}\right) 
 \\
 & \quad = \Exp \left(\Exp\|u(t;s_1;f) - u(t;s_1;0)\|_{-\al}^p\right)\big|_{f=u(s_1;s_2;0)} 
 \\
 & \quad \leq \Exp\left(\sup_{f_1,f_2\in \C^{-\al_0}} \|u(t;s_1;f_2) - u(t;s_1;f_1)\|_{-\al}\right)^p 
 \\
 & \quad \lesssim \ee^{-\lambda_* (t-s_1)}.
\end{equs}
This implies that the sequence $(\eta(t;s))_{s\leq0}$ is Cauchy in $L^p(\Omega;\C^{-\al})$. Hence there exists
$\eta(t)\in L^p(\Omega;\C^{-\al})$ such that $\eta(t;s) \to \eta(t)$ in $L^p(\Omega;\C^{-\al})$, as $s\searrow -\infty$, 
and furthermore we have the estimate
\begin{equs}
 \left(\Exp\|\eta(t;s) - \eta(t)\|_{-\al}^p\right)^{\frac{1}{p}} & \lesssim \ee^{-\frac{\lambda}{p} (t-s)}. 
\end{equs}
Using again Theorem \ref{thm:main_synchr} it is easy to see that
\begin{equs}
  \left(\Exp\sup_{f\in \C^{-\al_0}}\|u(t;s;f) - \eta(t)\|_{-\al}^p\right)^{\frac{1}{p}} 
  & \lesssim \ee^{-\frac{\lambda}{p} (t-s)}, 
\end{equs}
which in particular holds for $s=0$. To prove that $\eta(t)$ is stationary in $t$ it suffices to notice that
since $\{u(t;s;\cdot);t\geq s >-\infty\}$ is a white noise stochastic semi-flow, we have that
\begin{equs}
 u(t;s;0) = u(t-s;0;0) = u(0;-(t-s);0),
\end{equs}
where the above equalities hold in law. Since $u(t;s;0)\to \eta(t)$ and $u(0;-(t-s);0)\to \eta(0)$ in $L^p(\Omega;\C^{-\al})$, as $s\searrow -\infty$,
we obtain that $\eta(t)$ is stationary. It is also easy to see that the law of $\eta(0)$ is invariant for \eqref{eq:sAC}. Indeed, for every
continuous bounded function $F$ on $\C^{-\al}$ we have that
\begin{equs}
 \Exp\big( \Exp F(u(t;f))\big)\big|_{f=\eta(0)} & =
 \lim_{s\to-\infty}\Exp\big( \Exp F(u(t;f))\big)\big|_{f=u(0;s;0)}
 = \lim_{s\to-\infty}\Exp F(u(t;s;0)) 
 \\
 & = \lim_{s\to-\infty}\Exp F(u(0;-(t-s);0))  
 = \Exp F(\eta(0))
\end{equs}
which proves the claim. 
\end{proof}

\begin{remark} \label{rem:other_pol} When $d=2$ we can actually consider any odd polynomial non-linearity with negative leading coefficient. In particular, as 
in \cite{DPD03}, we can study the limit $u$ of
\begin{equs}
 \begin{cases}
  & (\partial_t - \Delta) u_\eps = - \sum_{k=1}^n a_k \mathcal{H}_k(u_\eps, C_\eps) + \xi_\eps \\
  & u_\eps|_{t=0} = f*\rho_\eps,
 \end{cases}
\end{equs}
where $n$ is odd and $a_n>0$. Here $\mathcal{H}_k(u,C)$ stands for the $k$-th Hermite polynomial. Combining \cite[Corollary 3.10]{TW18i} 
and \cite[Corollary 1.9]{HS19} the proof of Theorem \ref{thm:main_synchr} immediately applies to this case. 
\end{remark}

\begin{appendix}

\section{Control of negative Besov norms}

Below, for $p\geq 1$ and function $f\in L^p$ we let 
\begin{equs}
 \varphi_p(f) = 2^{p-1} \int \left(f^p \mathbf{1}_{\{f\geq 0\}} - |f|^p \mathbf{1}_{\{f<0\}}\right) \, \dd x = 
 2^{p-1} \int \mathrm{sgn}(f) |f|^p \, \dd x.
\end{equs}

In \cite[Example 2.7]{BS19} the functional $\varphi_p$ was introduced to provide a bound on the difference of two ordered functions $g\preceq f$ in the 
$L^p$-norm, that is, 
\begin{equs}
 \|f-g\|_{L^p}^p & \leq \varphi_p(f) - \varphi_p(g).
\end{equs}
In the same spirit one can built a functional $\varphi_{-\al;p}$ which allows to bound the difference of two ordered distributions $g\preceq f$ in the Besov
norm $\|\cdot\|_{-\al;p}$, for $p\geq 1$ and $\al>0$, as we prove in the following lemma. 

% For $f\in L^p$, $p\geq 1$, we define  
% %
% \begin{equs}
%  %
%  \varphi_p(f) := 2^{p-1} \int \left(f^p \mathbf{1}_{\{f\geq 0\}} - |f|^p \mathbf{1}_{\{f<0\}}\right) \, \dd x.
%  %
% \end{equs}
% %
% Then, for $\al>0$ and a distribution $f$ such that $\|f\|_{-\al;p}<\infty$ we define
% %
% \begin{equs}
%  %
%  \varphi_{-\al;p}(f) := \int_0^1 s^{\frac{\al p}{2}} \varphi_p(f_s) \frac{\dd s}{s}. 
%  %
% \end{equs}
% %

\begin{lemma} \label{lem:norm_decoupl} Let $\al>0$ and $p\geq 1$. For periodic distributions $g\preceq f$ such that $\|f\|_{-\al;p}, \|g\|_{-\al;p}<\infty$ the 
following estimate holds,
\begin{equs}
 \|f-g\|_{-\al;p}^p & \leq \int_0^1 s^{\frac{\al p}{2}} \varphi_p(f_s) \frac{\dd s}{s} - \int_0^1 s^{\frac{\al p}{2}} \varphi_p(g_s) \frac{\dd s}{s}.
\end{equs}
\end{lemma}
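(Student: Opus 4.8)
The plan is to reduce the Besov-norm bound to the pointwise $L^p$-level statement recalled just above the lemma, namely that for ordered functions $g \preceq f$ one has $\|f-g\|_{L^p}^p \leq \varphi_p(f) - \varphi_p(g)$. The key observation is that the heat semigroup preserves the order relation $\preceq$: if $g \preceq f$ as periodic distributions then $g_s = \ee^{s\Delta} g \preceq \ee^{s\Delta} f = f_s$ for every $s \in (0,1]$, because the heat kernel is non-negative, so testing $f_s - g_s$ against a non-negative test function gives a non-negative result, and moreover $f_s, g_s$ are genuine smooth functions for $s>0$. Thus the $L^p$-level inequality applies to the pair $g_s \preceq f_s$ at each fixed $s$.

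First I would record that $f_s - g_s = (f-g)_s$ and that $f-g \succeq 0$, so in particular $(f-g)_s \geq 0$ pointwise; hence $\|(f-g)_s\|_{L^p}^p = \int ((f-g)_s)^p \, \dd x = \varphi_p((f-g)_s)/2^{p-1}$ is already controlled, but more usefully I would directly invoke $\|f_s - g_s\|_{L^p}^p \leq \varphi_p(f_s) - \varphi_p(g_s)$ from \cite[Example 2.7]{BS19} applied to $g_s \preceq f_s$. Then I would multiply this inequality by the weight $s^{\al p/2}$, which is non-negative, and integrate $\dd s / s$ over $(0,1]$:
\begin{equs}
\|f-g\|_{-\al;p}^p = \int_0^1 s^{\frac{\al p}{2}} \|f_s - g_s\|_{L^p}^p \frac{\dd s}{s} \leq \int_0^1 s^{\frac{\al p}{2}} \big(\varphi_p(f_s) - \varphi_p(g_s)\big) \frac{\dd s}{s},
\end{equs}
using the definition \eqref{eq:besov_neg_p} of $\|\cdot\|_{-\al;p}$ on the left and linearity of the integral on the right. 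Splitting the right-hand integral into the two pieces involving $f_s$ and $g_s$ separately then yields exactly the claimed bound.

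The only point requiring a little care — and the main (mild) obstacle — is justifying that the two integrals $\int_0^1 s^{\al p/2} \varphi_p(f_s)\,\dd s/s$ and the analogous one for $g$ are each finite, so that the splitting into a difference of two finite quantities is legitimate rather than an $\infty - \infty$ situation; this is where the hypotheses $\|f\|_{-\al;p}, \|g\|_{-\al;p} < \infty$ enter. I would handle this by the elementary bound $|\varphi_p(h)| \leq 2^{p-1}\|h\|_{L^p}^p$, valid for any $h \in L^p$, applied with $h = f_s$ and $h = g_s$: this gives $\int_0^1 s^{\al p/2} |\varphi_p(f_s)| \, \dd s/s \leq 2^{p-1} \|f\|_{-\al;p}^p < \infty$ and similarly for $g$, which is precisely the finiteness remark $\varphi_{-\al;p}(f) \leq 2^{p-1}\|f\|_{-\al;p}^p$ already stated in the excerpt. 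With both integrals absolutely convergent, the rearrangement is valid and the proof is complete.
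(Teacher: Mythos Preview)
Your proof is correct and follows essentially the same route as the paper: lift the order $g\preceq f$ to $g_s\leq f_s$ via positivity of the heat kernel, apply the $L^p$-level inequality $\|f_s-g_s\|_p^p\leq\varphi_p(f_s)-\varphi_p(g_s)$ from \cite[Example 2.7]{BS19} at each $s$, and integrate against the weight $s^{\al p/2}\,\dd s/s$. Your additional care in justifying the splitting via $|\varphi_p(h)|\leq 2^{p-1}\|h\|_p^p$ is a welcome detail that the paper leaves implicit.
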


\begin{proof} Since $g\preceq f$, for every $s\in (0,1]$ we also have that $g_s\leq f_s$. By \cite[Proof of Example 2.7]{BS19} we know that  
\begin{equs}
 \|f_s-g_s\|_p^p & \leq \varphi_p(f_s) - \varphi_p(g_s)
\end{equs}
which in turn implies that
\begin{equs}
 \|f-g\|_{-\al;p}^p = \int_0^1 s^{\frac{\al p}{2}} \|f_s-g_s\|_{L^p}^p \frac{\dd s}{s} & \leq \int_0^1 s^{\frac{\al p}{2}} (\varphi_p(f_s) - \varphi_p(g_s)) \frac{\dd s}{s}.
\end{equs}
\end{proof}

\begin{lemma} \label{lem:phi_bnd} Let $\al>0$ and $p\geq 1$ integer. For periodic distributions $f,g$ such that $\|f\|_{-\al;p}, \|g\|_{-\al;p}<\infty$ the following
estimate holds,
\begin{equs}
 & \left|\int_0^1 s^{\frac{\al p}{2}} \varphi_p(f_s) \frac{\dd s}{s} - \int_0^1 s^{\frac{\al p}{2}} \varphi_p(g_s) \frac{\dd s}{s}\right| \\
 & \quad \lesssim_{\al,p} \left(\|f-g\|_{-\al;p}\wedge 1\right) \, 
 \max\left\{1, \|f\|_{-\al;p}^p + \|g\|_{-\al;p}^p \right\}. 
\end{equs}
\end{lemma}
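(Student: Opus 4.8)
The plan is to estimate the difference $\varphi_p(f_s) - \varphi_p(g_s)$ pointwise in $s\in(0,1]$ and then integrate against $s^{\frac{\al p}{2}}\,\frac{\dd s}{s}$. The starting observation is that $\varphi_p(h) = 2^{p-1}\int \mathrm{sgn}(h)|h|^p\,\dd x$ differs from $\|h\|_p^p$ only through the sign, and that the map $t\mapsto 2^{p-1}\mathrm{sgn}(t)|t|^p$ is $C^1$ with derivative $2^{p-1}p|t|^{p-1}$ (for $p\geq 2$ integer; for $p=1$ it is $2\,\mathrm{sgn}(t)$ and the argument simplifies). Hence, writing $\Phi(t) := 2^{p-1}\mathrm{sgn}(t)|t|^p$, the mean value theorem along the segment from $g_s(x)$ to $f_s(x)$ gives
\begin{equs}
 |\Phi(f_s(x)) - \Phi(g_s(x))| \lesssim_p |f_s(x)-g_s(x)|\,\left(|f_s(x)|^{p-1} + |g_s(x)|^{p-1}\right).
\end{equs}
Integrating in $x$ and applying Hölder's inequality with exponents $p$ and $\frac{p}{p-1}$,
\begin{equs}
 |\varphi_p(f_s) - \varphi_p(g_s)| \lesssim_p \|f_s-g_s\|_p \left(\|f_s\|_p^{p-1} + \|g_s\|_p^{p-1}\right).
\end{equs}

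Now multiply by $s^{\frac{\al p}{2}}$, split the factor as $s^{\frac{\al p}{2}} = s^{\frac{\al}{2}}\cdot s^{\frac{\al(p-1)}{2}}$, and integrate $\frac{\dd s}{s}$ over $(0,1]$; one more application of Hölder in the $s$-variable (again with exponents $p$ and $\frac{p}{p-1}$, against the measure $\frac{\dd s}{s}$) yields
\begin{equs}
 \int_0^1 s^{\frac{\al p}{2}} |\varphi_p(f_s) - \varphi_p(g_s)| \frac{\dd s}{s}
 & \lesssim_p \left(\int_0^1 s^{\frac{\al p}{2}} \|f_s-g_s\|_p^p \frac{\dd s}{s}\right)^{\frac{1}{p}}
 \left(\int_0^1 s^{\frac{\al p}{2}} \left(\|f_s\|_p + \|g_s\|_p\right)^p \frac{\dd s}{s}\right)^{\frac{p-1}{p}} \\
 & \lesssim_p \|f-g\|_{-\al;p}\,\left(\|f\|_{-\al;p}^{p-1} + \|g\|_{-\al;p}^{p-1}\right),
\end{equs}
using the definition \eqref{eq:besov_neg_p} and the elementary inequality $(a+b)^p \lesssim_p a^p + b^p$. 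This is the bound without the truncation.

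To upgrade the factor $\|f-g\|_{-\al;p}$ to $\|f-g\|_{-\al;p}\wedge 1$ and absorb the powers $p-1$ into the stated $\max\{1,\|f\|_{-\al;p}^p + \|g\|_{-\al;p}^p\}$, I distinguish two regimes. If $\|f-g\|_{-\al;p}\leq 1$ there is nothing to do beyond noting $\|h\|_{-\al;p}^{p-1}\leq 1 + \|h\|_{-\al;p}^p$. If $\|f-g\|_{-\al;p}> 1$, I instead bound the left-hand side crudely by $|\varphi_{-\al;p}(f)| + |\varphi_{-\al;p}(g)| \lesssim_p \|f\|_{-\al;p}^p + \|g\|_{-\al;p}^p$ (which follows from the pointwise inequality $|\varphi_p(h_s)|\leq 2^{p-1}\|h_s\|_p^p$ already recorded in the text), and since $\|f-g\|_{-\al;p}\wedge 1 = 1$ in this case the claimed estimate holds. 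Combining the two regimes gives the lemma. The only mildly delicate point is the $C^1$ bound on $\Phi$ and the case $p=1$; everything else is two bookkeeping applications of Hölder's inequality, so I expect no serious obstacle.
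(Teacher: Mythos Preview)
Your proof is correct and follows essentially the same route as the paper: the paper obtains the same Lipschitz-type pointwise bound on $|\varphi_p(f_s)-\varphi_p(g_s)|$ via the algebraic factorisation $|a^p\mathbf{1}_{\{a\ge0\}}-b^p\mathbf{1}_{\{b\ge0\}}|\le |a-b|\big(|a|^{p-1}+\cdots+|b|^{p-1}\big)$ in place of your mean value theorem on $\Phi$, leaves the two H\"older applications (in $x$ and in $s$) implicit, and then combines with the crude bound $2^{p-1}(\|f\|_{-\al;p}^p+\|g\|_{-\al;p}^p)$ exactly as in your case split. (One trivial slip: for $p=1$ one has $\Phi(t)=t$, so $\Phi'=1$ rather than $2\,\mathrm{sgn}(t)$; as you note, the argument trivialises there anyway.)
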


\begin{proof} On the one hand, we have that
\begin{equs}
 |\varphi_p(f_s) - \varphi_p(g_s)| & \leq 2^{p-1} \left(\|f_s\|_p^p + \|g_s\|_p^p\right),
\end{equs}
which implies that
\begin{equs}
 \left|\int_0^1 s^{\frac{\al p}{2}} \varphi_p(f_s) \frac{\dd s}{s} - \int_0^1 s^{\frac{\al p}{2}} \varphi_p(g_s) \frac{\dd s}{s}\right| & \leq 2^{p-1} \left(\|f\|_{-\al;p}^p + \|g\|_{-\al;p}^p\right). \label{eq:bnd_1}
\end{equs}
On the other hand, we notice that
\begin{equs}
 |\varphi_p(f_s) - \varphi_p(g_s)| &  \leq 2^{p-1} \int |f_s^p \mathbf{1}_{\{f_s\geq 0\}} - g_s^p \mathbf{1}_{\{g_s\geq 0\}}| \, \dd x
 \\
 & \quad + 2^{p-1} \int |f_s^p \mathbf{1}_{\{f_s< 0\}} - g_s^p \mathbf{1}_{\{g_s< 0\}}| \, \dd x.
\end{equs}
Then, using the fact that for $a,b\in \RR$,
\begin{equs}
 \left. 
 \begin{aligned} 
  & |a^p\mathbf{1}_{\{a\geq 0\}}-b^p\mathbf{1}_{\{b\geq 0\}}| \\
  & |a^p\mathbf{1}_{\{a< 0\}}-b^p\mathbf{1}_{\{b< 0\}}|
 \end{aligned}
 \right\} & \leq |a-b| \left(|a|^{p-1} + |a|^{p-2} |b| + \ldots + |a| |b|^{p-2} + |b|^{p-1}\right),  
\end{equs}
we easily see that
\begin{equs}
 \left|\int_0^1 s^{\frac{\al p}{2}} \varphi_p(f_s) \frac{\dd s}{s} - \int_0^1 s^{\frac{\al p}{2}} \varphi_p(g_s) \frac{\dd s}{s}\right|
 \lesssim 2^{p-1} \|f-g\|_{-\al;p} \left(\|f\|_{-\al;p}^{p-1} + \|g\|_{-\al;p}^{p-1}\right).
 \\
 \label{eq:bnd_2}
\end{equs}
Combining \eqref{eq:bnd_1} and \eqref{eq:bnd_2} we obtain the desired estimate. 
\end{proof}

\section{Order preservation for approximations}

\begin{proposition} \label{prop:order_pres} Let $f_1,f_2$ be smooth functions such that $f_1\preceq f_2$ and denote by $u_\eps(\cdot;f_i)$ the solution 
to 
\begin{equs}
 \begin{cases}
  & (\partial_t - \Delta) u_\eps = - \left(u^3_\eps - 3C u_\eps\right) + u_\eps+ \xi*\rho_\eps \\
  &  u_\eps|_{t=0} = f_i,
 \end{cases}
\end{equs}
for $i=1,2$, $C>0$ and $\rho_\eps$ a smooth mollifier for $\eps\in(0,1]$ . Then, we have that $u_\eps(t;f_1) \preceq u_\eps(t;f_2)$, for every $t\geq 0$.
\end{proposition}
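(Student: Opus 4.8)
The plan is to remove the noise by subtracting a common stochastic convolution and then invoke the classical parabolic comparison principle for the resulting random PDE. Concretely, let $Z_\eps$ denote the solution of the linear stochastic heat equation $(\partial_t-\Delta)Z_\eps = \xi*\rho_\eps$ with $Z_\eps|_{t=0}=0$, understood in the mild sense. Since $\rho_\eps$ is a spatial mollifier, for $\Prob$-almost every $\omega$ the path $t\mapsto Z_\eps(t)$ lies in $\C([0,T];\C^\infty(\TT^d))$ for every $T>0$, and in particular is bounded on $[0,T]\times\TT^d$. Setting $v_i := u_\eps(\cdot;f_i) - Z_\eps$, the function $v_i$ solves, pathwise, the random PDE
\[
(\partial_t-\Delta)v_i = F\big(v_i+Z_\eps\big), \qquad v_i|_{t=0}=f_i,
\]
where $F(y) = -y^3 + (3C+1)y$. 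Because the right-hand side is continuous in $(t,x)$, parabolic regularity upgrades $v_i$ to a classical solution, smooth and bounded on $[0,T]\times\TT^d$ for every $T>0$ (the negativity of the leading coefficient of $F$ rules out blow-up; alternatively one may simply invoke the well-posedness and regularity of $\{u_\eps(t;s;\cdot)\}$ already used above).

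Next I would compare $v_1$ and $v_2$. Writing $w := v_2 - v_1$, one has $w|_{t=0}=f_2-f_1\succeq 0$ and
\[
(\partial_t-\Delta)w = F\big(v_2+Z_\eps\big) - F\big(v_1+Z_\eps\big) = c\,w ,
\]
where $c(t,x)$ is the divided difference of $F$ between $v_1(t,x)+Z_\eps(t,x)$ and $v_2(t,x)+Z_\eps(t,x)$ (and equals $F'(v_1+Z_\eps)$ at points where they coincide). Since $F\in\C^1$ and $v_1,v_2,Z_\eps$ are bounded on $[0,T]\times\TT^d$, the coefficient $c$ is bounded there. Hence $w$ solves a linear parabolic equation with bounded zeroth-order coefficient and non-negative initial datum, and the parabolic maximum principle—applied, say, to $\ee^{-\lambda t}w$ with $\lambda>\|c\|_{L^\infty([0,T]\times\TT^d)}$, so that the zeroth-order coefficient in the equation for $\ee^{-\lambda t}w$ is non-positive—yields $w\geq 0$ on $[0,T]\times\TT^d$. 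As $T>0$ is arbitrary,
\[
u_\eps(t;f_2) - u_\eps(t;f_1) = v_2(t) - v_1(t) \succeq 0, \qquad t\geq 0 ,
\]
which is the claim.

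The main obstacle I anticipate is the pathwise boundedness of $v_1,v_2$ on compact time intervals, needed to make $c$ a bona fide bounded coefficient; this is precisely where the dissipative structure of the cubic enters. One obtains it from an $L^p$ energy estimate—testing the equation for $v_i$ against $|v_i|^{p-2}v_i$, the leading contribution $-\!\int|v_i|^{p+2}\,\dd x\le 0$ absorbs the lower-order terms via Young's inequality, giving a bound depending only on $\|f_i\|_\infty$ and $\sup_{[0,T]\times\TT^d}|Z_\eps|$. Everything else is the standard comparison principle for semilinear heat equations. Alternatively one could sidestep this point by first truncating $F$ outside a large ball, running the comparison argument for the truncated equation, and then letting the truncation level tend to infinity once a posteriori bounds on $v_1,v_2$ are available.
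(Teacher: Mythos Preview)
Your argument is correct, but the paper takes a slightly different and more self-contained route. Both you and the paper write the difference $w=u_\eps(\cdot;f_2)-u_\eps(\cdot;f_1)$ as the solution of a linear equation with zeroth-order coefficient $c(t,x)=-3\int_0^1\big(\lambda u_\eps(\cdot;f_2)+(1-\lambda)u_\eps(\cdot;f_1)\big)^2\,\dd\lambda+(3C+1)$. You then invoke the pointwise parabolic maximum principle, for which you need $c$ to be bounded on $[0,T]\times\TT^d$; this forces you to first establish $L^\infty$ bounds on $u_\eps(\cdot;f_i)$ (or equivalently on your $v_i$), which you obtain via an $L^p$ energy estimate using the dissipativity of the cubic. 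The paper instead multiplies by $\ee^{-Kt}$ and tests the resulting equation against the negative part $w_-$ in $L^2$ (a Stampacchia-type argument). The point is that the quadratic part of $c$ has a sign: $-3\int_0^1(\ldots)^2\,\dd\lambda\le 0$ regardless of any boundedness, so after choosing $K\ge 3C+1$ one immediately gets $\tfrac12\partial_t\|w_-\|_2^2\le 0$ and hence $w_-\equiv 0$. Thus the paper's approach dispenses entirely with the a priori $L^\infty$ bounds (and with the detour through $Z_\eps$, which in your argument only serves to secure regularity and boundedness, since the noise cancels in $w$ anyway). Your approach, on the other hand, is the textbook comparison principle and would generalise verbatim to any $\C^1$ nonlinearity once boundedness of solutions is in hand.
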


\begin{proof} Let $v(t)= \ee^{-Kt} (u_\eps(t;f_2) - u_\eps(t;f_1))$ for some $K>0$. Then $v$ solves the following equation,
\begin{equs}
 \begin{cases}
  & (\partial_t - \Delta) v = -(K - 3C - 1) v - 3 \int_0^1 \big(\lambda u_\eps(\cdot;f_2) + (1-\lambda) u_\eps(\cdot;f_1) \big)^2 \, \dd \lambda \, v \\
  &  u_\eps|_{t=0} = f_2-f_1.
 \end{cases}
\end{equs}
Testing with $v_- := - (v\wedge 0)$ we obtain that
\begin{equs}
 & \frac{1}{2} \partial_t \|v_-\|_2^2 + \|\nabla v_-\|_2^2 
 \\
 & \quad = - (K - 3C - 1) \|v_-\|_2^2 
 - 3 \left \lng \int_0^1 \big(\lambda u_\eps(\cdot;f_2) + (1-\lambda) u_\eps(\cdot;f_1) \big)^2 \, \dd \lambda , v_-^2 \right\rng_{L^2}.
\end{equs}
Choosing $K\geq 3C + 1$ gives that
\begin{equs}
 \frac{1}{2} \partial_t \|v_-\|_2^2 \leq 0
\end{equs}
which in turn implies that 
\begin{equs}
 \|v_-(t)\|_2 \leq  \|v_-(0)\|_2,
\end{equs}
for every $t\geq 0$. Since $f_1 \preceq f_2$ we know that $v_-(0) = 0$, hence $\|v_-(t)\|_2 = 0$, for every $t\geq 0$, which completes
the proof. 
\end{proof}

\end{appendix}

\pdfbookmark{References}{references}
\addtocontents{toc}{\protect\contentsline{section}{References}{\thepage}{references.0}}

\bibliographystyle{plain}
\bibliography{synchr_bibliography}{}
%\bibliography{Pavlos_paper}{}

\begin{flushleft}
\small \normalfont
\textsc{Benjamin Gess\\
Max--Planck--Institut f\"ur Mathematik in den Naturwissenschaften\\ 
04103 Leipzig, Germany\\
Faculty of Mathematics, University of Bielefeld\\
33615 Bielefeld, Germany}\\
\texttt{\textbf{benjamin.gess@mis.mpg.de}}
\end{flushleft}

\begin{flushleft}
\small \normalfont
\textsc{Pavlos Tsatsoulis\\
Max--Planck--Institut f\"ur Mathematik in den Naturwissenschaften\\
04103 Leipzig, Germany}\\
\texttt{\textbf{pavlos.tsatsoulis@mis.mpg.de}}
\end{flushleft}

\end{document}